\newcommand\cM{{\mathcal M}}
\theoremstyle{plain}
\newtheorem{theorem}{Theorem}[section]
\newtheorem{lemma}[theorem]{Lemma}
\newtheorem{conjecture}[theorem]{Conjecture}
\newtheorem{proposition}[theorem]{Proposition}
\theoremstyle{definition}
\newtheorem{defn}[theorem]{Definition}
\newtheorem{claim}[theorem]{Claim}
\newtheorem*{remark}{Remark}
\newcommand\cref[1]{Corollary~\ref{cor:#1}}
\title{On the maximum number of copies of $H$ in graphs with given size and order}
\author{D\'aniel Gerbner%\thanks{Research supported by the J\'anos Bolyai Research Fellowship of the Hungarian Academy of Sciences and the National Research, Development and Innovation Office -- NKFIH under the grant K 116769.} 
, D\'aniel T. Nagy%\thanks{Research supported by the National Research, Development and Innovation Office - NKFIH under the grant K 116769.}
, Bal\'azs Patk\'os%\thanks{Research supported by the National Research, Development and Innovation Office -- NKFIH under the grants SNN 116095 and K 116769.}
, M\'at\'e Vizer \\ %\thanks{Research supported by the National Research, Development and Innovation Office -- NKFIH under the grant SNN 116095.}
\medskip 
\small Alfr\'ed R\'enyi Institute of Mathematics, Hungarian Academy of Sciences\\
\small P.O.B. 127, Budapest H-1364, Hungary.\\
\medskip
\small \texttt{\{gerbner,nagydani,patkos\}@renyi.hu, vizermate@gmail.com}
\medskip}
\begin{document}
\maketitle
\begin{abstract} We study the maximum number $ex(n,e,H)$ of copies of a graph $H$ in graphs with given number of vertices and edges. We show that for any fixed graph $H$, $ex(n,e,H)$ is asymptotically realized by the quasi-clique provided that the edge density is sufficiently large. We also investigate a variant of this problem, when the host graph is bipartite.

\smallskip
\textbf{Keywords:} number of subgraphs, extremal graph theory, bipartite graph, quasi-clique
\end{abstract}

\section{Introduction}
In this paper, we address (some variants of) the problem of finding the graph that maximizes the number of copies of some fixed graph $H$ over all $n$-vertex graphs with $e$ edges. Formally, we denote by $N(H,G)$ the number of copies of $H$ in $G$ and we will be interested in determining $ex(n,e,H)=\max\{N(H,G): |V(G)|=n,|E(G)|=e\}$. 
%(We can note here somewhere that )
In the early 1970's Ahlswede and Katona proved \cite{ak} that the number of cherries (stars with two leaves or equivalently paths on three vertices) is always maximized by one of the following two graphs: 
\begin{itemize}
\item
if $e=\binom{a}{2}+b$ with $0\le b<a$, then the \textit{quasi-clique} $K^e_n$ contains a clique of size $a$ and at least $n-a-1$ isolated vertices,
\item
the \textit{quasi-star} $S^e_n$ is the complement of $K^{\binom{n}{2}-e}_n$.
\end{itemize}

Generalizing cherries, Kenyon, Radin, Ren, and Sadun \cite{krrs} determined the asymptotic value of $ex(n,e,S_j)$ for small values of $j$ with $S_j$ being the star with $j$ leaves. Their result was extended to arbitrary stars by Reiher and Wagner.

\begin{theorem}[Reiher, Wagner \cite{Reiher}]\label{reiher}
For any $j\ge 2$ we have $$ex(n,e,S_j)=\max(N(S_j,K^e_n),N(S_j,S^e_n))+O(n^{j}).$$
For any $j\ge 2$ there exists a number $0<\gamma_j<1$ such that the asymptotically best construction is $S^e_n$ if $0\le e\le \gamma_j\binom{n}{2}$ and $K^e_n$ if $\gamma_j\binom{n}{2}\le e\le \binom{n}{2}$.
\end{theorem}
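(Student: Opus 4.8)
Since any copy of $S_j$ in $G$ is determined by its center $v$ together with a $j$-element subset of $N_G(v)$, we have
$$N(S_j,G)=\sum_{v\in V(G)}\binom{d_G(v)}{j},$$
so that $ex(n,e,S_j)$ equals the maximum of the convex symmetric functional $\Phi(d_1,\dots,d_n)=\sum_{i=1}^n\binom{d_i}{j}$ over all graphical sequences $d_1\ge\dots\ge d_n$ with $\sum_i d_i=2e$ and each $d_i\le n-1$. Since $N(S_j,K^e_n)$ and $N(S_j,S^e_n)$ are of order $n^{j+1}$ in the dense regime while the claimed error is only $O(n^j)$, it suffices to identify the extremal degree sequence up to a perturbation that is $o(n^{j+1})$ and then carry out the resulting optimization.

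The structural step uses convexity of $x\mapsto\binom xj$ together with the Erd\H{o}s--Gallai inequalities: a convex functional over the region (in the sorted coordinates) cut out by these piecewise-linear constraints is maximized at an extreme point, and one may always pass to a degree sequence majorizing the current one without decreasing $\Phi$, so the optimum may be taken majorization-maximal, i.e., the degree sequence of a threshold graph. I would then run a ``block-merging'' exchange argument on the construction string of the threshold graph — deleting an edge at a low-degree vertex and inserting one at a high-degree vertex whenever the Erd\H{o}s--Gallai slack allows, which never decreases $\Phi$ by convexity — to reduce the problem, up to $O(n^j)$, to the three-block family $T(a,b)$: a clique on a set $A$ of $a$ vertices, a set $B$ of $b$ vertices joined to everything, and a set $C$ of $n-a-b$ vertices joined only to $B$ (together with at most one extra vertex to absorb divisibility). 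Here $|E(T(a,b))|=\binom a2+bn-\binom{b+1}2$ and
$$\Phi(T(a,b))=b\binom{n-1}j+a\binom{a+b-1}j+(n-a-b)\binom bj,$$
the endpoint $b=0$ is the quasi-clique $K^e_n$, and the endpoint $a=0$ is the quasi-star $S^e_n$.

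It remains to show that, along the constraint curve $\binom a2+bn-\binom{b+1}2=e$, the quantity $\Phi(T(a,b))$ attains its maximum at one of these two endpoints. Putting $e=x\binom n2$ and passing to $p=b/n$, $q=a/n$ — so that $q=\sqrt{(1-p)^2-(1-x)}$ and $p$ ranges over $[0,\,1-\sqrt{1-x}\,]$ — this amounts to maximizing $F(p)=p+q(p+q)^j+(1-p-q)p^j$ over that interval. I expect this single-variable optimization, and in particular checking that $F'$ has at most one zero and in the direction forcing the maximum to an endpoint \emph{uniformly in $x$}, to be the main obstacle; the reduction to $T(a,b)$ with a genuinely $O(n^j)$ (rather than merely $o(n^{j+1})$) error also needs care. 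Granting this, $ex(n,e,S_j)=\max\!\big(N(S_j,K^e_n),N(S_j,S^e_n)\big)+O(n^j)$, the matching lower bound being immediate.

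For the location of the threshold, evaluate the two endpoints: $F(0)=x^{(j+1)/2}$ (the quasi-clique's clique has $\sim\sqrt x\,n$ vertices), while with $u=\sqrt{1-x}$ one gets $F(1-\sqrt{1-x})=(1-u)\big(1+u(1-u)^{j-1}\big)=:g(x)$ (the quasi-star has $\sim(1-u)n$ universal vertices and $\sim un$ vertices of degree $\sim(1-u)n$), so $N(S_j,K^e_n)=\tfrac{n^{j+1}}{j!}\big(x^{(j+1)/2}+o(1)\big)$ and $N(S_j,S^e_n)=\tfrac{n^{j+1}}{j!}\big(g(x)+o(1)\big)$. As $x\to0$ one has $g(x)\sim\tfrac x2$ while $x^{(j+1)/2}=o(x)$ since $(j+1)/2>1$, so the quasi-star wins; as $x\to1$ one has $g(x)=1-ju^2+O(u^3)$ and $x^{(j+1)/2}=(1-u^2)^{(j+1)/2}=1-\tfrac{j+1}2u^2+O(u^3)$, and $j>\tfrac{j+1}2$ for $j\ge2$, so the quasi-clique wins. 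A short analysis of $g(x)-x^{(j+1)/2}$ on $(0,1)$ showing it changes sign exactly once then yields the desired $\gamma_j\in(0,1)$ with $S^e_n$ asymptotically optimal for $e\le\gamma_j\binom n2$ and $K^e_n$ for $e\ge\gamma_j\binom n2$.
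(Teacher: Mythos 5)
This statement is quoted in the paper as a known result of Reiher and Wagner \cite{Reiher}; the paper gives no proof of it, so your attempt can only be judged on its own terms, and as it stands it is a roadmap rather than a proof. The opening reductions are fine and standard: $N(S_j,G)=\sum_v\binom{d_G(v)}{j}$, and by convexity one may pass to a majorization-maximal graphical sequence, i.e.\ to a threshold graph. The genuine gap is everything after that. Your ``block-merging'' step --- reducing an arbitrary threshold graph to the three-block family $T(a,b)$ by exchanges that ``never decrease $\Phi$ by convexity'' --- is not justified and cannot be dispatched this way: any two threshold graphs with the same $n$ and $e$ are both maximal in the majorization order, so neither majorizes the other, and comparing them is precisely the hard content of Ahlswede--Katona (for $j=2$) and Reiher--Wagner (for general $j$). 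Indeed some local exchanges between threshold sequences genuinely decrease $\sum_i\binom{d_i}{j}$, which is exactly why the extremal graph flips between quasi-star and quasi-clique as the density varies; a one-line convexity argument cannot see this. On top of that, you explicitly defer the two remaining analytic steps: the endpoint analysis of $F(p)$ along the constraint curve (``Granting this\dots''), and the proof that $g(x)-x^{(j+1)/2}$ changes sign exactly once, which is what produces a single threshold $\gamma_j$ rather than several crossover densities. These, together with tightening the error from $o(n^{j+1})$ to the claimed $O(n^j)$, are the substance of the Reiher--Wagner paper (which carries out the optimization over threshold-type degree profiles by a careful variational analysis), so the proposal as written leaves the main difficulties unresolved.

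Your endpoint computations are correct and worth keeping: with $x=e/\binom n2$ and $u=\sqrt{1-x}$ one indeed gets $N(S_j,K^e_n)=\frac{n^{j+1}}{j!}\bigl(x^{(j+1)/2}+o(1)\bigr)$ and $N(S_j,S^e_n)=\frac{n^{j+1}}{j!}\bigl((1-u)+u(1-u)^j+o(1)\bigr)$, and the limiting comparisons at $x\to0$ and $x\to1$ correctly show that the quasi-star wins for small density and the quasi-clique for large density. But that only proves the two constructions exchange roles somewhere in $(0,1)$; it does not prove the structural claim that one of them is always asymptotically optimal, nor the uniqueness of the crossover.
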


Nagy \cite{Dani} determined the asymptotic value of $ex(n,e,P_4)$. Answering a question of his (Question 3 in \cite{Dani}) in the affirmative, we prove the following theorem.

\begin{theorem}\label{kvaziklikk}

For every $H$ there exists a constant $0<c_H < 1$ such that if $e\ge c_H\binom{n}{2}$, then among graphs with $n$ vertices and $e$ edges, the number of subgraphs isomorphic to $H$ is asymptotically maximized by the quasi-clique, i.e. $ex(n,e,H)=(1+o(1))N(H,K^e_n)$.

\end{theorem}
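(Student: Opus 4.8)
The plan is to reduce the whole problem, via one elementary inequality, to the already–settled case of stars (Theorem~\ref{reiher}). First I would clear away the trivial reductions. If $H$ has isolated vertices, write $H=H'\cup kK_1$ with $H'$ having no isolated vertices; then both $N(H,G)$ and $N(H,K^e_n)$ are obtained from the corresponding counts for $H'$ by multiplying by $\binom{n-v(H')}{k}$, so it suffices to treat $H$ with $\delta(H)\ge 1$. Next, writing $e=\binom q2+b$ with $0\le b<q$, so that $q=(1+o(1))\sqrt{2e}=\Theta(n)$ since $e\ge c_H\binom n2$, a direct count gives
\[
N(H,K^e_n)=(1+o(1))\,N(H,K_q)=(1+o(1))\frac{(q)_{v(H)}}{|\mathrm{Aut}(H)|}=(1+o(1))\frac{(2e)^{v(H)/2}}{|\mathrm{Aut}(H)|}
\]
(copies of $H$ using the extra vertex $w$ or an isolated vertex of $K^e_n$ contribute only $O(n^{v(H)-1})$ because $\delta(H)\ge 1$). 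Hence it is enough to prove $N(H,G)\le(1+o(1))(2e)^{v(H)/2}/|\mathrm{Aut}(H)|$, and since $N(H,G)=\frac1{|\mathrm{Aut}(H)|}\hom^{\mathrm{inj}}(H,G)\le\frac1{|\mathrm{Aut}(H)|}\hom(H,G)$ (where $\hom$, resp.\ $\hom^{\mathrm{inj}}$, counts all, resp.\ injective, homomorphisms), it suffices to show
\[
\hom(H,G)\le(1+o(1))(2e)^{v(H)/2}.
\]

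The one inequality that does the work: since $\delta(H)\ge 1$, $H$ has a spanning star forest, i.e.\ a subset $C\subseteq E(H)$ covering $V(H)$ whose components are stars $K_{1,a_1},\dots,K_{1,a_k}$; taking a minimum edge cover, which is well known to be a vertex‑disjoint union of stars, we get $1\le a_i\le\Delta(H)$ and $\sum_{i=1}^k(a_i+1)=v(H)$. As $C$ is a subgraph of $H$ whose components are vertex‑disjoint,
\[
\hom(H,G)\le\hom\bigl((V(H),C),G\bigr)=\prod_{i=1}^k\hom(K_{1,a_i},G)=\prod_{i=1}^k\ \sum_{v\in V(G)}d_G(v)^{a_i}.
\]
Now I would invoke Theorem~\ref{reiher}. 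Put $c_H:=\max\bigl(\{\tfrac12\}\cup\{\gamma_a:2\le a\le\Delta(H)\}\bigr)$, so $0<c_H<1$. For each $a$ with $2\le a\le\Delta(H)$, the hypothesis $e\ge c_H\binom n2\ge\gamma_a\binom n2$ makes $K^e_n$ the asymptotically optimal construction for $S_a$, hence $N(S_a,G)\le(1+o(1))N(S_a,K^e_n)$; passing to homomorphism counts — $\sum_v d_G(v)^a$ and $a!\,N(S_a,G)$ differ by $O(n^a)$, and the analogue for $K^e_n$ equals $(1+o(1))q^{a+1}=(1+o(1))(2e)^{(a+1)/2}=\Theta(n^{a+1})$ — this yields $\sum_v d_G(v)^a\le(1+o(1))(2e)^{(a+1)/2}$; for $a=1$ the left–hand side is exactly $2e=(2e)^{(1+1)/2}$. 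Multiplying the (at most $v(H)$) resulting inequalities gives $\hom(H,G)\le(1+o(1))(2e)^{\sum_i(a_i+1)/2}=(1+o(1))(2e)^{v(H)/2}$, which is what was needed.

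The inequality $\hom(H,G)\le\prod_i\hom(K_{1,a_i},G)$ is immediate once a spanning star forest of $H$ is fixed, and no structural information about $G$ enters except through its edge count: all the content about dense host graphs is imported wholesale from Theorem~\ref{reiher}. Consequently I expect the only place requiring genuine care to be the bookkeeping of the $(1+o(1))$ factors — one must verify that every error term (the $O(n^a)$ from Theorem~\ref{reiher}; the gaps between $N(\cdot)$, $\hom^{\mathrm{inj}}(\cdot)$ and $\hom(\cdot)$ for the stars and for $H$ itself; the contribution of the vertex $w$ and the isolated vertices of $K^e_n$) is indeed of strictly smaller order than $(2e)^{v(H)/2}=\Theta(n^{v(H)})$, which is exactly what the assumption $e\ge c_H\binom n2$ guarantees via $q=\Theta(n)$.
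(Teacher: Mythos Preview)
Your proof is correct and follows essentially the same approach as the paper's: decompose $H$ into a spanning star forest, bound the homomorphism count of $H$ by the product over the component stars, and invoke Theorem~\ref{reiher} on each factor to show the product is asymptotically achieved by the quasi-clique. The paper obtains the star forest via a spanning tree together with Proposition~\ref{propo}, works with injective rather than all homomorphisms, and compares to $K^e_n$ through Lemma~\ref{count}(i) rather than a direct asymptotic computation of $N(H,K^e_n)$; your explicit treatment of isolated vertices and of the $a_i=1$ factors is a welcome bit of extra care, but otherwise the arguments are the same.
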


Let us mention that results on the maximizing $N(H,G)$ over all graphs containing $e$ edges with an arbitrary number of vertices were obtained by Alon \cite{alon1,alon2}, Bollob\'as and Sarkar \cite{bs,bs2}, and F\"uredi \cite{F}. Bollob\'as, Nara, and Tachibana \cite{BNT} studied the maximum number of \emph{induced} copies of $K_{s,t}$ in graphs with $e$ edges.

%$\bullet$ $ex(e,H)$

\subsection{Bipartite host graphs}

\ 

We will consider similar problems for bipartite host graphs. Let $ex_{bip}(n,e,H)$ denote the maximum possible number of copies of $H$ in a bipartite graph $B$ on $n$ vertices with $e$ edges. Formally, let $$ex_{bip}(n,e,H):=\max\{N(H,B): |V(B)|=n,E(B)=e, B ~\text{is bipartite} \}.$$ 

Note that in this variant of the problem there is no monotonicity in $e$; for a given $n$, if $e>e'$, then it is still possible that $ex_{bip}(n,e,H)<ex_{bip}(n,e',H)$, even when $e$ is less than the maximum number $\lfloor n^2/4\rfloor$ of edges in a bipartite graph on $n$ vertices, because more edges necessarily make the two parts more balanced. We will show an example at the end of Section 2.

Observe that if $H$ is a complete bipartite graph $K_{s,t}$, then every copy of $H$ in $B$ is an induced copy. Thus $ex_{bip}(n,e,H)$ is upper bounded by the maximum number of induced copies of $K_{s,t}$ in graphs with $e$ edges, provided that this maximum is given by a bipartite graph. Bollob\'as, Nara, and Tachibana \cite{BNT} showed that this is the case for $H=K_{t,t}$ in case $e=r^2$ or $e=r^2-1$ or $e=r^2+r$ for some $r$. In these cases the maximum number of induced copies of $K_{t,t}$ is given by $K_{r,r}$, $K_{r-1,r+1}$ and $K_{r,r+1}$ respectively, which implies $ex_{bip}(n,r^2,K_{t,t})=N(K_{t,t},K_{r,r})$, $ex_{bip}(n,r^2-1,K_{t,t})=N(K_{t,t},K_{r-1,r+1})$ and $ex_{bip}(n,r^2+r,K_{t,t})=N(K_{t,t},K_{r+1,r})$ respectively.

The bipartite analogue of the quasi-clique is the \textit{quasi-complete bipartite graph} $B_n^e$ defined below.

\begin{defn}
Let $n$ and $e$ be two positive integers satisfying $e\le n^2/4$. Let $t$ be the smallest positive integer $i$ such that $i(n-i)\ge e$. (Then $1\le t\le n/2$.) The graph $B_n^e$ is obtained from the complete bipartite graph $K_{t,n-t}$ by picking a vertex of degree $n-t$ and deleting $t(n-t)-e$ of its edges.
\end{defn}

Note that by the definition of $t$, we have $(t-1)(n-t+1)\le e-1$, hence the vertex that we picked will have (remaining) degree $(n-t)-(t(n-t)-e)=e-(t-1)(n-t)\ge t$.

%$B^e_n$. If $a(n-a)= e$, then $B^e_n$ is the complete bipartite graph $K_{a,n-a}$, while if $a(n-a)<e< (a+1)(n-a-1)$, then $B^e_n$ consists of two parts $U_1,U_2$ with $|U_1|=a+1,|U_2|=n-a-1$ all but one vertex $u\in U_1$ is connected to all vertices of $U_2$ and this exceptional vertex $u$ is connected to $e-a(n-a-1)$ vertices of $U_2$.
% \ 

% \textbf{Asymptotic result}

\begin{conjecture}\label{paros} For any integers $e,n$ with $\omega(n)=e\le \frac{n^2}{4}$ and any bipartite graph $H$ we have
$$ex_{bip}(n,e,H)=(1+o(1))N(H,B^e_n).$$ 

\end{conjecture}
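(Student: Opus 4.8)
The plan is to reduce Conjecture~\ref{paros}, by homomorphism counting, to a single inequality about bipartite \emph{trees}, and then to attack that inequality by induction on the number of vertices. The step that I expect to be the genuine obstacle is the extremal analysis for trees, which amounts to a Kruskal--Katona-type statement for bipartite graphs that must hold uniformly over the whole range $\omega(n)=e\le n^2/4$.

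Write $u:=n-t$, so that $e\le tu<e+n$ and $t\to\infty$ (indeed $t\ge e/n=\omega(1)$). Suppose first that $H$ is connected, with colour classes $A,B$ of sizes $p\le q$. Any homomorphism of $H$ into a bipartite graph $G$ with parts $X,Y$ sends $A$ wholly into $X$ or wholly into $Y$; since $K_{t,u}$ is complete bipartite, this gives, for \emph{every} spanning tree $T\subseteq H$,
$$\hom(H,K_{t,u})=\hom(T,K_{t,u})=t^pu^q+t^qu^p,$$
where $\hom$ counts all homomorphisms. Because $t,u\to\infty$ and $B^e_n$ is $K_{t,u}$ with fewer than $n$ edges deleted at one vertex, injective homomorphisms dominate and the defect is of lower order, so $N(H,B^e_n)=(1+o(1))(t^pu^q+t^qu^p)/|\mathrm{Aut}(H)|$. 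Since $N(H,G)\le\hom(H,G)/|\mathrm{Aut}(H)|\le\hom(T,G)/|\mathrm{Aut}(H)|$ for any spanning tree $T$, the connected case of Conjecture~\ref{paros} reduces to the following. \textbf{Tree Claim:} \emph{for every bipartite tree $T$ with colour classes of sizes $p\le q$ and every bipartite $G$ on $n$ vertices with $\omega(n)=e\le n^2/4$ edges, $\hom(T,G)\le(1+o(1))(t^pu^q+t^qu^p)$, the $o(1)$ depending only on $T$.} A disconnected $H$ needs no new idea: $\hom(H,G)=\prod_i\hom(H_i,G)\le\prod_i\hom(T_i,G)$ for spanning trees $T_i$ of the components, and on $B^e_n$ the corresponding product is attained up to $1+o(1)$, so the Tree Claim applied to each $T_i$ suffices.

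For the Tree Claim note that if $G$ has parts of sizes $x\le n/2$ and $n-x$ then $e\le x(n-x)$ forces $x\in[t,n/2]$ and $n-x\in[n/2,u]$. The base cases already show the flavour. For $T=P_3$ one has $\hom(P_3,G)=\sum_v d(v)^2=\sum_{vw\in E(G)}(d(v)+d(w))\le en$, since every edge $vw$ of a bipartite graph satisfies $d(v)+d(w)\le n$, and $en\le tun=t^1u^2+t^2u^1$. For $T=P_4$ one has $\hom(P_4,G)=2\sum_{vw\in E(G)}d(v)d(w)$, and writing each edge as $vw$ with $v\in X$, $w\in Y$,
$$\sum_{vw\in E(G)}d(v)d(w)=\sum_{v\in X}d(v)\sum_{w\in N(v)}d(w)\le\sum_{v\in X}d(v)\cdot e=e^2\le t^2u^2,$$
so $\hom(P_4,G)\le 2t^2u^2$. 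For a general $T$ I would induct on $|V(T)|$: choose a leaf $\ell$ with neighbour $w$, set $g(v):=$ the number of homomorphisms of $T-\ell$ sending $w\mapsto v$, and use $\hom(T,G)=\sum_v g(v)d(v)$, where $\sum_v g(v)=\hom(T-\ell,G)$ is supplied by the inductive hypothesis and $\sum_v d(v)=2e$.

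The hard part is precisely the estimate for $\sum_v g(v)d(v)$. Bounding $d(v)$ crudely by $\max(x,n-x)$ loses a constant factor as soon as $x>t$, i.e.\ whenever $G$ is sparser than the complete bipartite graph on its own two parts; to recover the sharp constant one seems to need the structural input that (near-)extremal $G$ are close to complete bipartite, together with a rearrangement estimate using that $g$ and $d$ are both concentrated on the high-degree vertices --- this is a Kruskal--Katona-type theorem for bipartite graphs and the functional $\hom(T,\cdot)$. In the dense range $e=\Theta(n^2)$ one could instead pass to bipartite graphons and reduce to the claim that the off-diagonal step graphon of $K_{t,u}$ maximises the homomorphism density of $H$ under the edge-density constraint, but even this finite-dimensional optimisation is open for general $H$, and graphons are unavailable in the sparse regime $e=o(n^2)$ that the conjecture also covers. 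Finally, a naive compression/shifting argument cannot substitute for this: for $H=2K_2$ the genuine maximiser of $N(H,G)$ at fixed $e$ is a near-perfect matching, which only \emph{ties} $B^e_n$ asymptotically, and compressing a matching to a chain graph strictly decreases the number of copies of $2K_2$, so any reduction to a structured family of host graphs must be carried out more carefully than by shifting alone.
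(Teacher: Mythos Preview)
The statement you are attempting is Conjecture~\ref{paros}, which the paper does \emph{not} prove; it is left open, with only the special cases of stars (Theorem~\ref{paroscsillag}) and of graphs admitting a spanning star-plus-matching (Theorem~\ref{parosasy}) established. So there is no proof in the paper to compare against, and it would be surprising if a short argument settled the full conjecture.

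Your reduction is sound as far as it goes: for connected $H$ one has $\hom(H,K_{t,u})=\hom(T,K_{t,u})=t^pu^q+t^qu^p$ for any spanning tree $T$, and $\hom(H,G)\le\hom(T,G)$, so the conjecture would indeed follow from your Tree Claim. You are equally right that the Tree Claim is the entire difficulty. Your own diagnosis is accurate: the leaf-deletion induction gives $\hom(T,G)=\sum_v g(v)d(v)$ with $\sum_v g(v)$ controlled inductively, but without structural information on $G$ the pointwise bound on $d(v)$ loses a constant factor the moment the parts of $G$ are more balanced than $(t,u)$, and recovering that constant is exactly a bipartite Kruskal--Katona-type statement for $\hom(T,\cdot)$ that is not available. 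In short, your proposal is a correct reformulation, not a proof; the gap you name is genuine and, as far as the paper is concerned, open.

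It may still be worth recording how the paper handles its special cases, since the method is different from yours. Rather than passing to a spanning tree and bounding $\hom(T,G)$, the paper covers $V(H)$ by a vertex-disjoint star $S_k$ and a matching $\cM$, bounds $h(H,G)\le h(S_k,G)\cdot h(\cM,G)\le h(S_k,B^e_n)\cdot\binom{e}{m}$ using the \emph{exact} star result of Theorem~\ref{paroscsillag}, and then checks via Lemma~\ref{count}(ii) that in $B^e_n$ almost every copy of $S_k+\cM$ extends to a copy of $H$. This succeeds precisely because the star case is known exactly; it does not generalise to arbitrary $H$ for the same reason your induction stalls --- one would need the analogue of Theorem~\ref{paroscsillag} for a richer class of building blocks.
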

Note that this does not contradict the exact results mentioned earlier, as $N(K_{t,t},K_{r,r})=(1+o(1))N(K_{t,t},B^{r^2}_n)$ if $\omega(n)=e$.
Observe furthermore that the assumption $\omega(n)=e$ is necessary as $K_{a,n-a}$ does not contain any copy of $K_{a+1,a+1}$. We prove Conjecture \ref{paros} in a stronger exact form for stars.

\begin{theorem}\label{paroscsillag}
For any integers $e,n$ with $e\le \frac{n^2}{4}$ we have
$ex_{bip}(n,e,S_k)=N(S_k,B^e_n)$.

\end{theorem}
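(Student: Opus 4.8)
The plan is to count copies of the star $S_k$ in a bipartite graph $B$ purely in terms of its degree sequence, and then show that, subject to the constraints $\sum d_v = 2e$ and bipartiteness, the sum $\sum_v \binom{d_v}{k}$ is maximized by the degree sequence of $B_n^e$. Write $V(B)=X\cup Y$ with $|X|=x$, $|Y|=y$, $x+y=n$. Every copy of $S_k$ in $B$ has its center either in $X$ (and all leaves in $Y$) or in $Y$ (and all leaves in $X$), so
$$N(S_k,B)=\sum_{v\in X}\binom{d_v}{k}+\sum_{v\in Y}\binom{d_v}{k}=\sum_{v\in V(B)}\binom{d_v}{k}.$$
Thus we must maximize $f(B):=\sum_v\binom{d_v}{k}$ over all bipartite $B$ with $n$ vertices and $e$ edges. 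The first step is to fix the bipartition sizes $x\le y$ and optimize over all bipartite graphs with those parts; by convexity of $t\mapsto\binom{t}{k}$, for a fixed part one should make the degrees as unequal as possible. Concretely, within $K_{x,y}$ (which has $xy\ge e$ edges, so we may assume $xy\ge e$), the extremal configuration is to take as many vertices of $X$ as possible of full degree $y$, then one vertex of intermediate degree, the rest degree $0$ — and symmetrically check whether concentrating on the $X$-side or the $Y$-side (or splitting) is better. This is exactly the local structure of $B_n^e$: a complete bipartite $K_{t,n-t}$ with one vertex's degree reduced. I would prove a ``compression''/exchange lemma: if two vertices in the same part have degrees $a\le b$ with $a>0$ and $b<$ (size of the other part), then moving an edge from the low-degree to the high-degree vertex does not decrease $f$, and the process terminates at the $B_n^e$-type configuration (for that choice of $x,y$).

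The second step is to optimize over the choice of bipartition size $t$. Here one uses the defining property that $t$ is the \emph{smallest} $i$ with $i(n-i)\ge e$: shrinking the smaller side below $t$ makes $K_{x,n-x}$ too small to hold $e$ edges, while enlarging it spreads the edges over more vertices on the big side and (after the compression above) yields degrees on the large side that are smaller, decreasing $\sum\binom{d_v}{k}$. One must also compare ``center in small part'' versus ``center in large part'' contributions: in $B_n^e$ the $t$ vertices of the small side have degree $n-t$ (or $n-t$ minus a bit for one of them), contributing $\approx t\binom{n-t}{k}$, while the large side has $t-1$ vertices of degree $t$ (full) plus one of smaller degree, contributing $O(n\binom{t}{k})$; since $k$ is fixed and $n-t\ge n/2$ while $t\le n/2$, for the exact statement I would do a careful case analysis on small $e$ (where $t$ is a constant) separately, but the main regime is handled by the monotonicity in $t$.

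I expect the main obstacle to be the genuinely \emph{exact} (not asymptotic) optimization: proving that no competing bipartition or no ``almost-$B_n^e$'' graph ever ties or beats $B_n^e$, including boundary cases such as $e$ exactly equal to $t(n-t)$ (so the reduced vertex disappears), very small $e$, and the non-monotonicity in $e$ flagged in the introduction. The convexity/compression argument cleanly reduces everything to a one-parameter family indexed by $t$, but then one needs a clean inequality — essentially showing $g(t):=\max$-$f$-value-with-small-side-$t$ is unimodal with maximum at the prescribed $t$ — and verifying this requires comparing $\binom{n-t}{k}$ against $\binom{n-t-1}{k}$ and tracking the single irregular vertex's contribution through the transition. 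A secondary technical point is handling the case $k=1$ trivially (then $f=2e$ for every graph, so the statement is vacuous/immediate) and ensuring the argument is stated for $k\ge 2$; and one should note the edge case where the ``smallest $i$'' leaves the reduced vertex with degree exactly $t$, matching the remark after the definition.
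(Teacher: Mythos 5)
Your opening moves match the paper: counting $S_k$-copies as $\sum_v\binom{d_v}{k}$ and applying an edge-shifting exchange (move an edge from a low-degree vertex to a higher-degree vertex in the same part, which does not decrease $\sum\binom{d_v}{k}$ by convexity) is exactly how the paper begins, and this step yields a ``neighbor-nested'' graph, i.e.\ a Ferrers-diagram-shaped bipartite graph. But from that point on your proposal is a plan rather than a proof, and the plan's two load-bearing claims are precisely the parts you leave unproved. First, the exchange lemma only gives nestedness; it does \emph{not} show that for fixed part sizes the optimum is the quasi-complete configuration, because raising degrees on the $X$-side simultaneously changes the conjugate degree sequence on the $Y$-side, so plain one-sided convexity does not decide between concentrating full degrees in $X$, concentrating them in $Y$, or stopping at an intermediate staircase shape. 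Second, the ``monotonicity/unimodality in $t$'' that is supposed to finish the argument is only asserted (``shrinking the smaller side \dots while enlarging it \dots decreasing $\sum\binom{d_v}{k}$''), and you yourself flag it as the expected obstacle; note also that your parametrization by declared part sizes is slippery, since isolated vertices make the bipartition sizes non-canonical and the real constraint is that the number of nonempty rows plus nonempty columns of the diagram is at most $n$. The paper's Remark that $N(S_k,B_n^e)$ can \emph{decrease} when $e$ increases is a warning that these exact comparisons are genuinely delicate and cannot be waved through by a qualitative ``spreading decreases the sum'' argument.

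What is missing is exactly the paper's second stage: it places a weight $w(i,j)=\binom{i-1}{k-1}+\binom{j-1}{k-1}$ on the unit squares of the Ferrers diagram (so the total weight is $N(S_k,G)$), isolates three properties of $w$ --- symmetry, monotonicity, and crucially $w(x-z,y)\le w(x,y-z)$ for $1\le z<y\le x$ --- and then runs a two-step square-moving algorithm (flatten everything above the largest corner square into rows, then top up the second-from-top row from the top row) which never decreases total weight, respects the vertex budget $m+t\le n$, strictly decreases the column vector lexicographically, and can only halt at $B_n^e$. That inequality-plus-algorithm (or some equivalent exact comparison of all nested shapes under the row-plus-column constraint) is the content of the theorem; without it, your steps ``the compression terminates at the $B_n^e$-type configuration for that choice of $x,y$'' and ``$g(t)$ is unimodal with maximum at the prescribed $t$'' are conjectures, and the proposal does not yet prove the exact statement (your remarks about $k=1$ and the boundary degree being at least $t$ are fine but peripheral).
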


Using Theorem \ref{paroscsillag} we can verify conjecture \ref{paros} for a wider class of graphs.

\begin{theorem}\label{parosasy}
If the vertex set of a bipartite graph $B$ is covered by vertex disjoint edges and possibly a star $S_j$ $(j\ge 2)$, then we have $$ex_{bip}(n,e,B)=(1+o(1))N(B,B^e_n).$$
\end{theorem}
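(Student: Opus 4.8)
The plan is to reduce the problem to Theorem~\ref{paroscsillag} by exploiting the structural hypothesis that $V(B)$ has a spanning subgraph consisting of $m$ independent edges together with (possibly) one star $S_j$. Write $B$ as a subgraph of the tensor/blow-up-like structure determined by these pieces: a copy of $B$ in a host graph $G$ is obtained by first choosing where the star $S_j$ goes (if present) and then, for each of the $m$ edges $e_i$ of the perfect matching on the remaining vertices, choosing an edge of $G$ to which $e_i$ maps, subject to the cross-constraints coming from edges of $B$ joining different pieces. The first step is therefore to set up this counting framework carefully and to observe that, up to lower-order terms, $N(B,G)$ factorizes: the dominant contribution comes from configurations in which the images of the matching edges and of the star are placed ``generically'' so that the only binding constraints are the ones forcing each matching edge to an edge of $G$ and the star to a copy of $S_j$ in $G$. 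Concretely I would show $N(B,G) = \Theta\!\big(N(S_j,G)\cdot e(G)^{m}\big)$ when the star is present, and $N(B,G)=\Theta\!\big(e(G)^m\big)$ when it is not, with the implied constants depending only on $B$, and — crucially — that the $(1+o(1))$ ratio between $B$-count and this product is the \emph{same} for the quasi-complete bipartite graph as for an optimal host, so that maximizing $N(B,G)$ asymptotically amounts to maximizing $N(S_j,G)\cdot e(G)^m$ (resp. $e(G)^m$) over bipartite $G$ with $n$ vertices and $e$ edges.

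The second step handles the case with no star: then we are simply maximizing $e(G)^m$ subject to $e(G)=e$, so every bipartite host does ``equally well'' at the level of this crude product, and one must recover the $(1+o(1))$-optimality of $B_n^e$ from the second-order behaviour. Here I would argue directly: in $B_n^e$ the number of ways to map $B$ is $(1-o(1))$ times the number of ways to choose $m$ edges of $B_n^e$ and an injection, because $B_n^e=K_{t,n-t}$ up to $o(n)$ edges at one vertex and in $K_{t,n-t}$ the cross-constraints between matching edges of $B$ are automatically satisfiable for a $(1-o(1))$-fraction of edge choices (any two edges of a large complete bipartite graph extend to a copy of any fixed bipartite configuration, up to lower-order loss). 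An arbitrary bipartite host can only do worse by the same token. This gives $ex_{bip}(n,e,B)=(1+o(1))\,m!\binom{e}{m}\cdot(\text{const})=(1+o(1))N(B,B_n^e)$; I would phrase it as: any bipartite $G$ satisfies $N(B,G)\le (1+o(1))N(B,B_n^e)$ because both sides are $(1+o(1))(\text{aut}(B))^{-1}\cdot c_B\, e^{m}\cdot (\text{number of embeddings of a single extra vertex-class pattern})$, and $B_n^e$ achieves the upper bound.

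The third step is the case where the star $S_j$ is present, which is the one that genuinely uses Theorem~\ref{paroscsillag}. By Step~1 it suffices to show that $\max_G N(S_j,G)\cdot e(G)^m$ over bipartite $n$-vertex $e$-edge graphs $G$ is $(1+o(1))$-attained at $G=B_n^e$; since $e(G)^m=e^m$ is fixed, this is exactly the statement $\max_G N(S_j,G)=ex_{bip}(n,e,S_j)$, and Theorem~\ref{paroscsillag} tells us this maximum is $N(S_j,B_n^e)$ and is attained \emph{at $B_n^e$}. Combining with the factorization $N(B,B_n^e)=(1+o(1))N(S_j,B_n^e)\cdot e^m\cdot(\text{const})$ from Step~1 closes the argument. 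I would also need the matching statement $N(B,G)\le(1+o(1))N(S_j,G)\cdot e^m\cdot(\text{const})$ for \emph{all} hosts $G$ (not just $B_n^e$), which is the direction of the factorization that may fail if $G$ is very unbalanced — but unbalanced $G$ have few stars and few $B$'s anyway, so a short case split (is $e(G)$ concentrated on a bounded number of vertices or not?) disposes of it.

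The main obstacle I anticipate is making the factorization in Step~1 fully rigorous while tracking that the $(1+o(1))$ factors are uniform over all hosts: the cross-edges of $B$ between the star and the matching edges, and between distinct matching edges, impose genuine constraints, and one has to verify that in the relevant hosts these constraints cost only a lower-order factor. The cleanest route is probably to fix the images of the ``connector'' vertices of $B$ (the endpoints incident to cross-edges) first — there are $O(1)$ of them, so this costs only a polynomial-in-$n$ but $o(1)$-relative overhead in the worst case — and then observe that once the connectors are fixed, the remaining vertices of each matching edge and of the star can be counted independently, reducing exactly to a product of ($S_j$-counts in induced neighbourhoods) and ($\text{codegree}$-type quantities), each of which is $(1-o(1))$ times its unconstrained value in $B_n^e$ and at most its unconstrained value in general. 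I expect the star case to be where Theorem~\ref{paroscsillag} does all the work and the no-star case to be essentially a direct computation.
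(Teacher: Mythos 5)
Your overall architecture is the same as the paper's: decompose $B$ along the given star-plus-matching cover, bound $N(B,G)$ from above by a (star count)$\times$(matching count) product, use Theorem~\ref{paroscsillag} to bound the star factor by its value in $B^e_n$, observe that the matching factor is essentially $\binom{e}{m}$ for every host, and check that in $B^e_n$ almost every vertex-disjoint star-plus-matching pair extends to a copy of $B$ (this is the paper's Lemma~\ref{count}(ii) together with the extension remark). So this is not a different route, and the skeleton would work.

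Two of the auxiliary claims you lean on are, however, off as stated, and they concern exactly the step where the paper is simpler than your plan. First, the two-sided factorization $N(B,G)=\Theta\bigl(N(S_j,G)\,e^m\bigr)$ ``with the same $(1+o(1))$ ratio for an optimal host as for $B^e_n$'' is false for general bipartite hosts: a host consisting of a large star together with a perfect matching on the remaining vertices has many copies of $S_j$ and many $m$-edge matchings but no copy of, say, $K_{2,2}$. All you need (and all you may hope for) is the one-sided inequality $N(B,G)\le(1+o(1))\,c_B\,N(S_j,G)\binom{e}{m}$ for \emph{every} bipartite host, with the same constant $c_B$ that $B^e_n$ asymptotically attains. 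Second, your proposed proof of that uniform bound via a case split justified by ``unbalanced $G$ have few stars and few $B$'s anyway'' rests on a backwards heuristic: among bipartite hosts with $e$ edges the star count is maximized by $B^e_n$ itself, which is the \emph{most} unbalanced complete bipartite graph with $e$ edges — that is precisely Theorem~\ref{paroscsillag}. No case split, and no connector-fixing (which in any case degenerates when every vertex of $B$ is incident to a cross-edge, e.g.\ $B=K_{s,t}$), is needed: every copy of $B$ in a bipartite $G$ contains a vertex-disjoint (star, matching) pair coming from the fixed cover, and each such pair lies in at most as many copies of $B$ as it does in the complete bipartite graph on $G$'s bipartition, since the missing edges of $B$ must go across the bipartition; this yields the uniform upper bound with the correct constant in one stroke, which is exactly the paper's chain bounding the count of $B$ by the product of the star count (then Theorem~\ref{paroscsillag}) and the matching count (then $\binom{e}{m}$). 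With that repair, and with your $(1-o(1))$-genericity argument in $B^e_n$ (which is Lemma~\ref{count}(ii) plus the observation that only pairs meeting the one deficient vertex can fail to extend), your argument coincides with the paper's proof.
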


Note that Theorem \ref{parosasy} covers the case $H=K_{s,t}$ for any fixed pair $s,t$ of integers.

\

% \textbf{Exact results}

% $\bullet$ Let $OPT(e,s,t)$ and $OPT(n,e,s,t,)$ be the solutions of the following quadratic optimization problems:

% 1. ...

% 2. ...

% \begin{theorem}\label{exactbip}

% $ex_{bip}(n,e,K_{s,t})=OPT(n,e,s,t)$

% \end{theorem}

% Possibly $K_{s,t}$ instead of $C_4$

\section{Proof of Theorem \ref{paroscsillag}}
If $G$ is a graph with vertex degrees $d_1, d_2,\dots, d_n$ then the number of $k$-edge stars in $G$ is
$$N(S_k,G)=\sum_{i=1}^n \binom{d_i}{k}.$$

We say that a bipartite graph is \textit{neighbor-nested}, if for any two vertices $x$ and $y$ from the same vertex class we have $N(x)\subseteq N(y)$ or $N(y)\subseteq N(x)$. We claim that among the bipartite graphs with $n$ vertices and
$e$ edges, the maximum number of $k$-edge stars can be achieved by a neighbor-nested graph. To prove this, assume that a bipartite graph $G$ is not neighbor-nested. Then there are two vertices $x$ and $y$ on the same side such that $\deg(x)\le \deg(y)$ but there is a vertex $z\in N(x)\backslash N(y)$. Let us replace the edge $xz$ by $yz$. The change in the number of $k$-edge stars is
$$\binom{\deg(y)+1}{k}-\binom{\deg(y)}{k}-\binom{\deg(x)}{k}+\binom{\deg(x)-1}{k}=
\binom{\deg(y)}{k-1}-\binom{\deg(x)-1}{k-1}\ge 0.$$
So the number of $k$-edge stars increases or stays the same during such a transformation. By the above calculation, the number of 2-edge stars always increases by at least 1. This quantity is obviously bounded by $\binom{e}{2}$, so after finitely many steps, the process must stop. This means that we reached a neighbor-nested graph that has at least as many $k$-edge stars as our original graph.

Assume that $G$ is a neighbor-nested bipartite graph, where the degrees of the vertices on one side are $a_1\ge a_2\ge\dots \ge a_m$ and on the other side they are $b_1\ge b_2\ge\dots \ge b_t$. We will visualize $G$ as follows. Consider the upper right quarter of the plane, where unit squares represent the edges of $G$. The first column has $a_1$ squares in it, the next one has $a_2$ and so on. Similarly, the bottom row consists of $b_1$ squares, the ones above it having $b_2,\dots, b_t$.

\begin{figure}[h]
\begin{center}
\includegraphics[scale = 0.5] {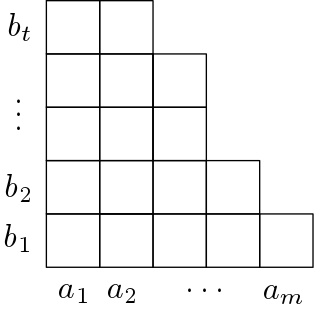}
\end{center}
\caption{A neighbor-nested graph visualized}
\label{fig1}
\end{figure}

We define the weight of a square in the $i$-th column and $j$-th row as $$w(i,j)=\binom{i}{k}-\binom{i-1}{k}+\binom{j}{k}-\binom{j-1}{k}=\binom{i-1}{k-1}+\binom{j-1}{k-1}.$$
Then the total weight of the squares is equal to
$$\sum_{i=1}^m \binom{a_i}{k}+\sum_{j=1}^t \binom{b_i}{k}=N(S_k,G).$$

It follows from the definition of the weight function that is has the following properties:
\begin{enumerate}[i)]
\item $w(i,j)=w(j,i)$ for all integers $i,j\ge 1$.
\item $w(i_1,j)\le w(i_2,j)$ for all integers $1\le i_1<i_2$ and $1\le j$.
\item $w(x-z,y)\le w(x,y-z)$ for all integers $1\le z < y\le x$.
\end{enumerate}

From now on we will only need to use these simple properties of the weight function, instead its exact definition.

Now we define an algorithm to transform $G$ while not decreasing the total weight of the squares. This algorithm will eventually reach $B^e_n$, showing its maximality.

{\bf Step 1:} Find the largest square $S$ whose bottom left corner is 0 in the diagram. If there are unit squares above $S$, we move them to the right side of $S$. Precisely, the unit squares in the $i$-th column above $S$ are added to the end of the $i$-th row.

\begin{figure}[h]
\begin{center}
\includegraphics[scale = 0.33] {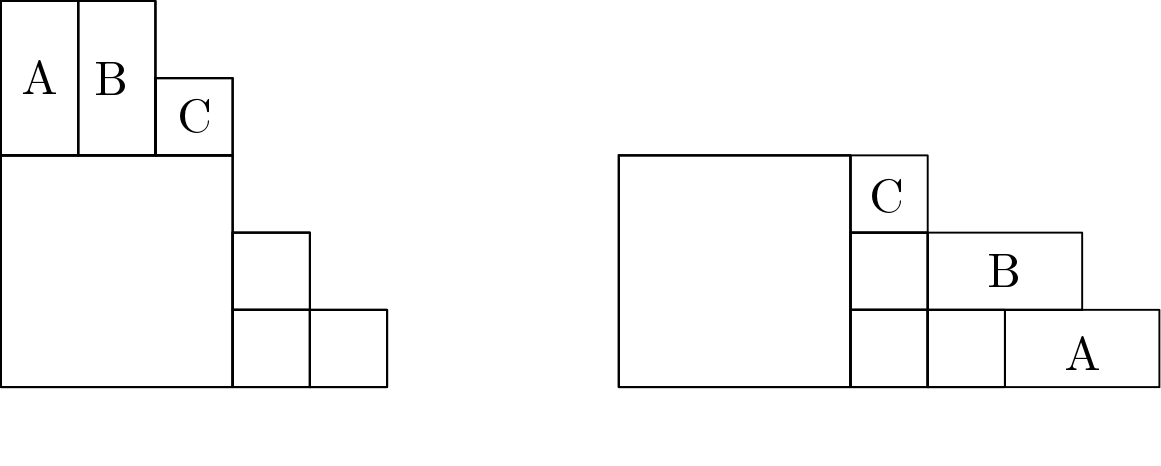}
\end{center}
\caption{An example for Step 1}
\label{fig2}
\end{figure}

By properties i) and ii) this transformation does not decrease the total weight. Also, the total number of vertices used on the two side of $G$ stays the same. The resulting graph will be neighbor-nested as well. After this transformation is completed (or there is nothing to do), we move the Step 2.

Note that after Step 1, there will be at most $a_1$ unit squares in any column, but there will be at least $a_1$ in any row.

{\bf Step 2:} Consider the second row from the top (it has $b_{t-1}$ squares) and the empty square right after it. We want to fill this and any other empty place directly under it using squares removed from the end of the top row. Since the top row has at least $a_1$ squares and we want to fill at most $a_1-1$ places, we have enough squares to work with. The transformation preserves the neighbor-nested property.

\begin{figure}[h]
\begin{center}
\includegraphics[scale = 0.7] {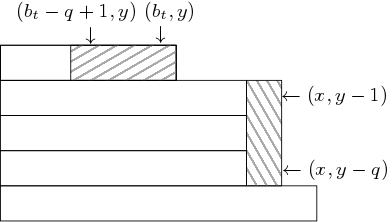}
\end{center}
\caption{An example for Step 2}
\label{fig3}
\end{figure}

Let us use the notations $x=b_{t-1}+1,~y=a_1$ and $q$ for the number of unit squares moved during the step. Then the weight of the moved squares will change from $w(b_t-q+1,y)+ \dots +w(b_t,y)$ to $w(x,y-q)+ \dots +w(x,y-1)$. Using property ii) then iii), we conclude that
$$w(b_t-q+1,y)+ \dots +w(b_t,y)\le w(x-q,y)+ \dots +w(x-1,y) \le w(x,y-q)+ \dots +w(x,y-1).$$

There are only two reasons that would stop us from doing this step. 1) There is only one row in the whole diagram. Then $G$ is of the form $B^e_n$. (This will happen if $e<n$.)
2) All the rows below the second-from-top have the same length as it and $m+t=n$. Then we are already using the maximum allowed number of vertices, and the step would create a new one. This means that $G=B^e_n$, and we are done.

If we completed Step 2, and did not arrive at $B^e_n$, then continue with Step 1 again.

We showed that the above algorithm does not decrease the number of $k$-edge stars, and it can only stop at $B_n^e$. To show that it indeed reaches $B_n^e$ (instead of entering an infinite loop) consider the vector $(a_1,a_2,\dots a_m)$ after each step. Observe that with respect to the lexicographic order, this vector decreases or stays intact during Step 1 and strictly decreases during Step 2. Therefore the algorithm must stop after finitely many transformations. 

\begin{remark}
%Somewhat surprisingly, i
It is possible that $N(S_k,B_n^e)>N(S_k,B_n^{e+1})$. It means that in some cases even if we have more edges to work with, we can create only fewer $k$-stars. To see an example, let us count 2-edge stars in $B_8^{12}$ and $B_8^{13}$.

\begin{figure}[h]
\begin{center}
\includegraphics[scale = 0.4] {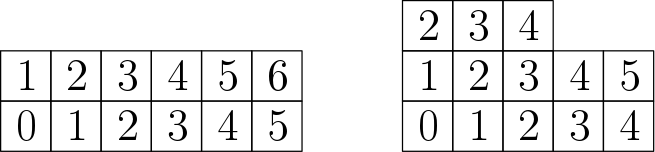}
\end{center}
\caption{The diagrams of $B_8^{12}$ and $B_8^{13}$ with the values of the weight function}
\label{fig4}
\end{figure}

$N(S_k,B_8^{12})=36$ and $N(S_k,B_8^{13})=34$. This situation can occur because one cannot create a graph with 13 edges by adding a new edge to the optimal 12-edge construction since it is already a complete bipartite graph using all the allowed vertices.
\end{remark}
%\section{Proof of Theorem \ref{exactbip}}

\section{Proof of the asymptotic results}

In this section we prove Theorem \ref{kvaziklikk} and Theorem \ref{parosasy}. The proof of these results follow the same lines: we prove or assume the existence of a spanning forest of $H$ the components of which satisfy the statement of the theorems, then we show by a simple computation that the number of these spanning forests is asymptotically maximized by the graphs stated in the theorems. Finally, we show that every such spanning forest can be extended only in one way to a copy of $H$ and in the extremal graphs almost every such forest indeed extends to a copy of $H$.

\begin{lemma}\label{count}
(i) For integers $e,n$ with $e\le \binom{n}{2}$ let $v_e$ denote $\max\{v:\binom{v}{2}\le e\}$. Let $a_1,a_2,\dots,a_k$ be a fixed sequence of positive integers and let $e=e_n$ tend to infinity. Then the number $S_{vd}(a_1,a_2,\dots,a_k,e)$ of vertex disjoint $k$-tuples $(S_1,S_2,\dots,S_k)$ of stars with $S_i$ having $a_i$ leaves in $K^e_n$ is $(1-o(1))\prod_{i=1}^k(a_i+1)\binom{\sqrt{v_e+1}}{a_i+1}$.

(ii) Let $k\ge 2$, $m\ge 0$ be fixed integers and let $e=e_n=\omega(n)$. Define $a=a_{e,n}=\max\{\alpha\le n/2: \alpha(n-\alpha)\le e\}$. Then the number of pairs $(S_k,\cM)$, where $S_k$ and $\cM$ are vertex-disjoint subgraphs of $B^e_n$ with $S_k$ being a star with $k$ leaves and $\cM$ being a matching of $m$ edges is $(1-o(1))\binom{e}{m}((a+1)\binom{n-a}{k}+(n-a)\binom{a+1}{k})$.
\end{lemma}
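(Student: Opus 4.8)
The plan is to prove both parts by direct asymptotic counting, exploiting the explicit structure of $K^e_n$ and $B^e_n$.

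\medskip

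\emph{Part (i).} Recall $K^e_n$ consists of a clique on $v=v_e+O(1)$ vertices (plus one extra vertex of positive degree and isolated vertices). A star $S_i$ with $a_i$ leaves sitting inside a clique $K_v$ is determined by its center together with its (unordered) leaf set: $(a_i+1)\binom{v}{a_i+1}$ choices, since we pick an $(a_i+1)$-subset of the $v$ clique vertices and then designate one of them as the center. Writing $v=v_e$ and noting $\binom{v}{2}\le e$ so $v\le\sqrt{2e}+O(1)$, and also $\binom{v+1}{2}>e$ so $v\ge\sqrt{2e}-O(1)$, we get $v=\sqrt{2e}(1+o(1))$; I should double-check the paper's normalization, since they write $\binom{\sqrt{v_e+1}}{a_i+1}$ — so the claimed count is $\prod_i (a_i+1)\binom{\sqrt{v_e+1}}{a_i+1}$, i.e.\ they are using $\sqrt{v_e+1}$ as (essentially) the clique size; one must reconcile $\binom{v}{2}=v(v-1)/2\le e$ with $v\approx\sqrt{v_e+1}$, which holds because $v_e=\max\{w:\binom w2\le e\}$ gives $\binom{v_e}{2}\le e<\binom{v_e+1}{2}$, so $v_e\approx\sqrt{2e}$ and the relabelling is cosmetic. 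First I would fix notation to match, then argue: the number of \emph{ordered} $k$-tuples of stars (not necessarily disjoint) with these leaf counts is exactly $\prod_{i=1}^k(a_i+1)\binom{v}{a_i+1}$ up to the lower-order contribution from the single vertex of degree $<v-1$; then the number of tuples that are \emph{not} pairwise vertex-disjoint is $O(v^{(\sum a_i)+k-1})$, a full power of $v$ smaller than the main term $\Theta(v^{\sum a_i+k})$, hence negligible. This gives $S_{vd}=(1-o(1))\prod_i(a_i+1)\binom{v}{a_i+1}$.

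\medskip

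\emph{Part (ii).} Here $B^e_n$ is (up to one vertex whose degree is shaved) the complete bipartite graph $K_{a,n-a}$ with $a=a_{e,n}$, $a(n-a)=e(1+o(1))$ and $a=o(n)$ is \emph{not} assumed — $a$ ranges up to $n/2$. A star $S_k$ with $k$ leaves in $K_{a,n-a}$ has its center either in the small side (giving $a$ choices for the center and $\binom{n-a}{k}$ for the leaves, which all lie in the big side) or in the big side (giving $n-a$ choices for the center and $\binom{a}{k}$ for the leaves); to match the stated $(a+1)\binom{n-a}{k}+(n-a)\binom{a+1}{k}$ one tracks that the one modified vertex sits on the big side and is adjacent to all of a set of size $\approx a+1$, a harmless $O(1)$ shift — again I would first align this bookkeeping. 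Having fixed $S_k$ (there are $\Theta(n\cdot a^{k})$ or $\Theta(a\cdot n^{k})$ of them depending on which side dominates, but in all cases $\Theta$ of the displayed sum), I then count disjoint matchings $\cM$ of $m$ edges avoiding the $k+1$ vertices of $S_k$: the graph minus those vertices still has $e-O_k(n)=e(1+o(1))$ edges, and the number of $m$-edge matchings in any graph with $E$ edges is $\binom{E}{m}(1-o(1))$ as $E\to\infty$ with $m$ fixed, because the number of \emph{non-matchings} among $m$-subsets of edges is $O(E^{m-1})$. Since $e=\omega(n)\to\infty$, $\binom{e-O(n)}{m}=(1-o(1))\binom{e}{m}$, and multiplying the two independent counts gives the claim $(1-o(1))\binom{e}{m}\bigl((a+1)\binom{n-a}{k}+(n-a)\binom{a+1}{k}\bigr)$.

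\medskip

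\emph{Main obstacle.} The routine parts (counting stars in a clique or complete bipartite graph, counting matchings) are easy; the only genuine care needed is twofold. First, the "$(1-o(1))$" requires that the error from the single atypical vertex in $K^e_n$ (resp.\ $B^e_n$) and from overlaps between the pieces really is a smaller order of magnitude — this is where one must be careful when $a$ and $n-a$ are comparable in part (ii), since then $\binom{a}{k}$ and $\binom{n-a}{k}$ are both polynomially large and one cannot discard either term; the matching count must lose only a $(1-o(1))$ factor, which forces the hypothesis $e=\omega(n)$ to be used exactly to guarantee $\binom{e}{m}/\binom{e-O(n)}{m}\to1$. Second, one must reconcile the paper's slightly idiosyncratic normalizations ($\sqrt{v_e+1}$ in (i); the "$+1$" shifts in (ii)) with the clean combinatorial counts above; I expect this to be the fiddliest but least deep part of the write-up.
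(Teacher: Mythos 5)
Your overall strategy --- sandwiching each count between the obvious upper bound and a lower bound in which overlapping configurations are shown to be of smaller order --- is the same as the paper's, and part (i) is essentially the paper's argument (the paper obtains the lower bound by greedily placing the $k$ stars inside the clique $K_{v_e}$, losing only $\binom{v_e-s}{a_i+1}$ versus $\binom{v_e+1}{a_i+1}$; you instead bound the number of non-disjoint tuples --- both work). One correction on (i): the $\sqrt{v_e+1}$ in the statement is a typo in the paper; its own proof, and the way the lemma is invoked in the proof of Theorem~\ref{kvaziklikk}, use $\binom{v_e+1}{a_i+1}$. So your computed answer $\prod_i(a_i+1)\binom{v}{a_i+1}$ with $v=(1+o(1))v_e$ is the intended one, but your attempted reconciliation ``$v\approx\sqrt{v_e+1}$'' is false: $v_e\approx\sqrt{2e}$, so $\sqrt{v_e+1}\approx(2e)^{1/4}$, a polynomially smaller quantity. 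You should simply say the square root should not be there, rather than claim the relabelling is cosmetic.

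The one genuine flaw is in part (ii): the assertion that in \emph{any} graph with $E$ edges the number of $m$-subsets of edges failing to be matchings is $O(E^{m-1})$ is false --- in a star with $E$ edges every two edges meet, so already for $m=2$ the bad subsets number $\binom{E}{2}$. What saves your argument is the structure of the host graph, and this is exactly where $e=\omega(n)$ must be used: in (a subgraph of) $K_{t,n-t}$ the number of intersecting pairs of edges is $\sum_v\binom{d_v}{2}\le t\binom{n-t}{2}+(n-t)\binom{t}{2}=O(ne)$, so the proportion of $m$-subsets of edges containing an intersecting pair is $O\bigl(ne\cdot E^{m-2}/E^{m}\bigr)=O(n/e)=o(1)$, and similarly for the event that a chosen edge meets the fixed $k$-star. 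This degree-based estimate is precisely the content of the two displayed probability bounds in the paper's proof, which works inside the copy of $K_{a,n-a-1}\subseteq B^e_n$ and shows that a uniformly random choice of $m$ edges and a $k$-star is vertex-disjoint with probability $1-o(1)$. With that bound substituted for your ``any graph'' claim (and the $t$ versus $a\pm 1$ bookkeeping you already flag carried out), your proof of (ii) coincides with the paper's, just phrased by fixing the star first instead of probabilistically.
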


\begin{proof}
Observe that the number of non-isolated vertices in $K_n^e$ is at most $v_e+1$, so we have $S_{vd}(a_1,a_2,\dots,a_k)\le \prod_{i=1}^k(a_i+1)\binom{v_e+1}{a_i+1}$. On the other hand, the size of the largest clique in $K_n^e$ is $v_e$, therefore, if $s=\sum_{i=1}^k(a_i+1)$, then $S_{vd}(a_1,a_2,\dots,a_k)\ge \prod_{i=1}^k(a_i+1)\binom{v_e-s}{a_i+1}$. As $e$ tends to infinity, so does $v_e$ and therefore $\binom{v_e-s}{a_i+1}/\binom{v_e+1}{a_i+1}$ tends to 1 for all $i=1,2,\dots,k$. This proves (i).

The expression $\binom{e}{m}((a+1)\binom{n-a}{k}+(n-a)\binom{a+1}{k})$ is clearly an upper bound in (ii). The lower bound follows from the fact that $K_{a,n-a-1}$ is a subgraph of $B^e_n$ and that for edges $e_1,e_2$ and a $k$-star $S_k$ of $K_{a,n-1}$ picked uniformly at random we have 
\begin{itemize}
\item
$\mathbb{P}(e_1\cap e_2\neq \emptyset)\le \frac{a\binom{n-a}{2}+(n-a-1)\binom{a}{2}}{\binom{a(n-a-1)}{2}}\rightarrow 0$ as $e=\omega(n)$ implies $a\rightarrow \infty$,
\item
$\mathbb{P}(e_1\cap S_k\neq \emptyset)\le \frac{a\binom{n-a-1}{k}(n-a+ka)+(n-a-1)\binom{a}{k}(a+k(n-a))}{e[a\binom{n-a-1}{k}+(n-a-1)\binom{a}{k}]} \rightarrow 0$ as $e=\omega(n)$ implies $a\rightarrow \infty$.
\end{itemize}
\end{proof}

%sketch: we take a spanning forest, if it is maximized, than the number of ways to extend it to $H$ is maximal. For the forest we take the partition from the next lemma, so it is enough to have the result for stars, Reiher and Wagner

\begin{proposition}\label{propo}
The vertex set $V$ of every tree $T$ on at least two vertices can be partitioned into $U_1,U_2,\dots,U_k$ such that $T[U_i]$ is a star with at least two vertices for all $i=1,2,\dots,k$.
\end{proposition}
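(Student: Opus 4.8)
\textit{Plan.} The natural approach is induction on $|V(T)|$, peeling off one star per step. The base case should swallow all stars at once: if $T$ is itself a star on at least two vertices then take $k=1$ and $U_1=V(T)$; in particular this covers every tree on $2$ or $3$ vertices. So from now on assume $T$ is not a star. Then its diameter is at least $3$ (a tree of diameter $\le 2$ on at least two vertices has a vertex adjacent to all others, hence is a star), so $T$ contains a path on at least four vertices.

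First I would fix a \emph{longest} path $P=v_0v_1\dots v_d$ in $T$, so $d\ge 3$. The key observation is that every neighbour of $v_1$ other than $v_2$ is a leaf of $T$: if a neighbour $u\notin\{v_0,v_2\}$ of $v_1$ had a further neighbour $w\ne v_1$, then acyclicity forces $u,w\notin\{v_1,\dots,v_d\}$, and $v_d v_{d-1}\dots v_2 v_1 u w$ would be a path longer than $P$, a contradiction. Hence, putting $U_1:=\{v_1\}\cup\bigl(N_T(v_1)\setminus\{v_2\}\bigr)$, the induced subgraph $T[U_1]$ is a star centred at $v_1$, and since $v_0\in U_1$ it has at least two vertices.

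Next I would pass to $T':=T-U_1$. Because $v_1$ is joined to $V(T)\setminus U_1$ only through $v_2$, removing $U_1$ just prunes a subtree hanging off $v_2$, so $T'$ is again a tree; and $T'$ still contains $v_2,v_3,\dots,v_d$, so $|V(T')|\ge d-1\ge 2$. The induction hypothesis then partitions $V(T')$ into $U_2,\dots,U_k$ with each $T'[U_i]=T[U_i]$ a star on at least two vertices, and $U_1,U_2,\dots,U_k$ is the desired partition of $V(T)$.

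\textit{Main difficulty.} There is no deep obstacle here; the only thing that needs care is ensuring the recursion never collapses, i.e.\ that after deleting the $\ge 2$-vertex star $U_1$ at least two vertices remain so the induction hypothesis still applies. This is precisely why the star case is extracted as the base case: a non-star tree is automatically large enough (it has a path on $\ge 4$ vertices), which is what guarantees $|V(T')|\ge 2$ in the inductive step.
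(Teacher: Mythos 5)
Your proof is correct, and it takes a somewhat different route from the paper's. Both arguments are inductions on $|V(T)|$ with the star case handled as the base, but the inductive steps differ: you peel off a single explicit star $U_1=\{v_1\}\cup(N_T(v_1)\setminus\{v_2\})$ sitting at the end of a longest path (whose penultimate vertex has only leaf neighbours besides $v_2$), and then recurse on the one remaining tree $T-U_1$; the paper instead picks a non-leaf vertex $v$ with a neighbour $u_1$ that is itself non-leaf, splits $T$ along the edge $vu_1$ into the two subtrees $T_1$ (vertices closer to $u_1$) and $T_2=T\setminus T_1$, each on at least two vertices, and applies induction to both halves without ever exhibiting a star explicitly. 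Your version buys a concrete greedy ``strip the deepest star'' algorithm at the cost of the longest-path bookkeeping; the paper's version avoids that bookkeeping but pushes the star-identification entirely into the recursion. One small remark on your write-up: your key observation claims that \emph{every} neighbour of $v_1$ other than $v_2$ is a leaf, yet the justification only treats $u\notin\{v_0,v_2\}$; you still need $v_0$ to be a leaf for $T-U_1$ to be a tree. This is not a real gap, since the identical argument applies to $u=v_0$ (the path $v_d v_{d-1}\dots v_1 v_0 w$ would be longer than $P$), equivalently: an endpoint of a longest path in a tree is a leaf, but you should say so rather than exclude $v_0$ from the case analysis.
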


\begin{proof}
We proceed by induction on $|V|$. If $|V|\le 4$, the statement follows as all trees on at most 4 vertices are either stars or paths (or both). If $|V|\ge 5$, consider a non-leaf vertex $v\in T$ and its neighbors $u_1,u_2,\dots,u_j$. If all $u_i$'s are leaves, then $T$ is a star, so we are done by using the partition with one single part. If, say, $u_1$ has a different neighbor than $v$, then we apply induction to $T_1$, the subtree of all vertices that are closer to $u_1$ than to $v$, and to $T_2=T\setminus T_1$.
\end{proof}

\begin{proof}[Proof of Theorem \ref{kvaziklikk}]
For any pair $H,G$ of graphs let $h(H,G)$ denote the number of injective homomorphisms from $H$ to $G$. As $N(H,G)=\frac{1}{a}h(H,G)$ where $a$ is the number of automorphisms of $H$, it is enough to prove the statement for $h(H,G)$. Let $T$ be a spanning tree of $H$ and let $U_1,U_2,\dots, U_k$ be a partition of the vertex set of $T$ into stars given by Proposition \ref{propo}. Clearly, we have $$h(H,G)\le h(T[U_1]+T[U_2]+\dots+T[U_k],G)\le \prod_{i=1}^kh(T[U_i],G).$$
Let us define $a_i$ by $T[U_i]=S_{a_i}$ for $i=1,\dots, k$. Recalling the definition of the constants $\gamma_j$ from Theorem \ref{reiher} we define $c_H:=\max\{\gamma_{a_i},~i=1,\dots, k\}$. If $e(G)\ge c_H\binom{n}{2}$ then by Theorem \ref{reiher} the right hand side is at most $(1+o(1))\prod_{i=1}^kh(S_{a_i},K^{e(G)}_n)=(1+o(1))\prod_{i=1}^k(a_i+1)\binom{v_e+1}{a_i+1}$

%By Theorem \ref{reiher} the right hand side is at most $(1+o(1))\prod_{i=1}^kh(T[U_i],K^{e(G)}_n)$ if $e(G)$ is large enough and this is $(1+o(1))\prod_{i=1}^k(a_i+1)\binom{v_e+1}{a_i+1}$ where $a_i$ is defined by $T[U_i]=S_{a_i}$.
Now observe that by Lemma \ref{count} (i) the number $h(T[U_1]+T[U_2]+\dots+T[U_k],K^{\binom{v_{e(G)}}{2}}_n)$ is asymptotically the same as the above expression. Moreover, any injective homomorphism of $T[U_1]+T[U_2]+\dots+T[U_k]$ to $K^{\binom{v_{e(G)}}{2}}_n$ is an injective homomorphism of $G$ to $K^{\binom{v_{e(G)}}{2}}_n$ as $K^{v_{e(G)}}_n$ is a complete graph. This shows $h(H,G)\le (1+o(1))h(H,K^{\binom{v_{e(G)}}{2}}_n)\le (1+o(1))h(H,K^{e(G)}_n)$.
\end{proof}

\begin{proof}[Proof of Theorem \ref{parosasy}]
Let $G$ be a bipartite graph on $n$ vertices with $e$ edges and let $B$ be a bipartite graph the vertices of which are covered by a star $S_k$ and a matching $\cM$ of $m\ge 0$ edges. Then 
\[
h(B,G)\le h(S_k+\cM,G)\le h(S_k,G)\cdot h(\cM,G)\le h(S_k,B^e_n)\cdot \binom{e}{m},
\]
where the first inequality follows from the fact that $G$ is bipartite and the last inequality follows from Theorem \ref{paroscsillag}. Lemma \ref{count} (ii) states that $h(S_k+\cM,B^e_n)=(1-o(1))h(S_k,B^e_n)\cdot \binom{e}{m}$. Finally observe that in $B^e_n$ all copies of $S_k+\cM$ can be extended to a copy of $B$ except those that contain the only vertex $v$ that is not adjacent to all vertices on the other side of $B$. As the number of such copies is negligible compared to the number of all copies (this is implicitly contained in the proof of Lemma \ref{count} (ii)), we obtain $h(B,G)\le h(S_k+\cM,G)=(1+o(1))h(B,B^e_n)$.
\end{proof}

% \section{Possible questions}

% $\bullet$ $ex_{bip}(n,e,K_{2,t})$

% $\bullet$ Berge type things

% $\bullet$ $ex(n,e,P_l,P_k)$ let's try to prove generalized version of the results of Ervin et al.

% $\bullet$ similar results for posets? (Alon+Sudakov)

\section{Concluding remarks}

A graph is bipartite if we forbid every odd cycle. Instead, one could forbid only the triangle. This motivates us to consider the maximum number of copies of $H$ in an $F$-free graph with $n$ vertices and $e$ edges. Let
$ex(n,e,H,F)$ denote this quantity. Note that without restricting the number of edges, this topic has attracted a lot of researchers recently, see e.g. \cite{as,ggmv,gp,gs}.

Let us note that there are some sporadic results regarding $ex(n,e,H,F)$. Fisher and Ryan \cite{fr} showed $ex(n,e,K_r,K_k)\le \binom{k-1}{r}(2e/(r-2)(r-1))^{r/2}$. Gerbner, Methuku and Vizer \cite{gmv} gave general bounds on $ex(n,e,K_r,F)$. Here we consider one particular problem and obtain an asymptotic result.

\begin{theorem}\label{k3}
$ex(n,e,P_l,K_3)=(1+o(1))N(P_l,B^e_n)$.
\end{theorem}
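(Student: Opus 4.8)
The plan is to follow exactly the same three-stage scheme used in the proofs of Theorems \ref{kvaziklikk} and \ref{parosasy}: first decompose $P_l$ into a spanning forest whose components behave well, then bound the number of such forests in an arbitrary $K_3$-free host graph $G$ via the star count, and finally verify that in the extremal graph almost all such forests extend to a genuine copy of $P_l$. The key observation that makes the bipartite machinery available is that a triangle-free graph on $n$ vertices with $e=\omega(n)$ edges is, for the purposes of counting stars, asymptotically no better than a bipartite one: by the Kővári–Sós–Turán bound (or simply by convexity together with the fact that any two adjacent vertices have disjoint neighborhoods in a $K_3$-free graph), one has $\sum_i \binom{d_i}{k} \le (1+o(1)) N(S_k, B^e_n)$ for every fixed $k$. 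Indeed a $K_3$-free graph $G$ with $e$ edges has maximum degree governed by independence of neighborhoods, and the quasi-complete bipartite graph $B^e_n$ is precisely the $K_3$-free graph maximizing $\sum_i\binom{d_i}{k}$ up to lower-order terms. I would isolate this as a short lemma: for every fixed $k$, $\max\{N(S_k,G): |V(G)|=n, |E(G)|=e, G \text{ is } K_3\text{-free}\} = (1+o(1))N(S_k,B^e_n)$.

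Next, the decomposition step. The path $P_l$ on $l$ vertices is itself a tree, so by Proposition \ref{propo} its vertex set partitions into $U_1,\dots,U_k$ with each $P_l[U_i]$ a star $S_{a_i}$ on at least two vertices. (For a path this partition is completely explicit — consecutive blocks of the form $P_2$ or $P_3$ — but we do not need the explicit form.) As in the proof of Theorem \ref{parosasy}, for any $K_3$-free graph $G$ on $n$ vertices with $e$ edges we get
\[
h(P_l,G)\le h(S_{a_1}+\dots+S_{a_k},G)\le \prod_{i=1}^k h(S_{a_i},G)\le \prod_{i=1}^k (1+o(1))\,(a_i+1)\binom{n-a}{a_i}\Big(1+o(1)\Big),
\]
using the star lemma above with $a=a_{e,n}$ as in Lemma \ref{count}(ii); more precisely each factor is $(1+o(1))h(S_{a_i},B^e_n)$, which by the computation in Lemma \ref{count} is $(1+o(1))\big((a+1)\binom{n-a}{a_i}+(n-a)\binom{a+1}{a_i}\big)$. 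Then I would show, exactly as in Lemma \ref{count}(i), that the number of vertex-disjoint tuples of stars with these leaf-counts inside $B^e_n$ — equivalently, the number of injective homomorphisms of the disjoint union $S_{a_1}+\dots+S_{a_k}$ into $B^e_n$ — is asymptotically equal to the product of the individual counts, because the overlap probability for independently chosen stars tends to $0$ when $a\to\infty$. This gives $h(P_l,G)\le (1+o(1))\,h(S_{a_1}+\dots+S_{a_k},B^e_n)$.

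Finally, the extension step: one checks that in $B^e_n$ almost every vertex-disjoint placement of the stars $S_{a_1},\dots,S_{a_k}$ can be reassembled in essentially one way into a copy of $P_l$, so that $h(S_{a_1}+\dots+S_{a_k},B^e_n)=(1+o(1))h(P_l,B^e_n)$. The count of "bad" placements — those meeting the unique deficient vertex $v$ of $B^e_n$, or those whose blocks fail to line up into a path because the chosen centers/leaves lie on the wrong sides — is of lower order, as already used implicitly in the proof of Lemma \ref{count}(ii) and Theorem \ref{parosasy}. Combining the three steps yields $N(P_l,G)\le (1+o(1))N(P_l,B^e_n)$ for every $K_3$-free $G$, while $B^e_n$ itself is $K_3$-free and achieves this, so $ex(n,e,P_l,K_3)=(1+o(1))N(P_l,B^e_n)$. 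I expect the main obstacle to be the star lemma — pinning down that among all $K_3$-free graphs with $n$ vertices and $e$ edges the star counts $\sum_i\binom{d_i}{k}$ are maximized, up to a $(1+o(1))$ factor, by $B^e_n$ — since the general host graph is not assumed bipartite, and one must rule out that some cleverly structured triangle-free but non-bipartite graph (e.g. built from blow-ups of $C_5$) beats the quasi-complete bipartite graph; this should follow from a degree-concentration argument using $e=\omega(n)$ and the Kővári–Sós–Turán bound on the number of vertices of large degree, but it is the one place where genuinely new (if routine) estimation beyond the bipartite case is needed.
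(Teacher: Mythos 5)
There is a genuine gap, and it is not the one you flag. Your scheme bounds $h(P_l,G)$ by $\prod_i h(S_{a_i},G)$ for a star decomposition of $P_l$ and then asserts that in $B^e_n$ almost every vertex-disjoint placement of the stars reassembles into a copy of $P_l$, i.e.\ $h(S_{a_1}+\dots+S_{a_k},B^e_n)=(1+o(1))h(P_l,B^e_n)$. This last equality is false: unlike the clique case of Theorem \ref{kvaziklikk} (where every injective map into the clique preserves all edges, so nothing is lost), in a complete bipartite host a constant fraction of the star placements have inconsistent ``side orientations'' and cannot be completed to a path at all. Concretely, for $P_4$ decomposed into two disjoint edges, $h(S_1+S_1,K_{a,a})\approx 4a^4$ while $h(P_4,K_{a,a})\approx 2a^4$; for general even $l$ the ratio is about $2^{l/2-1}$. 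So even granting every other step, your chain only yields $N(P_l,G)\le C\cdot N(P_l,B^e_n)$ for a constant $C>1$ depending on $l$, not the claimed $(1+o(1))$. In addition, your ``star lemma'' --- that among all $K_3$-free (not necessarily bipartite) graphs with $n$ vertices and $e$ edges the count $\sum_i\binom{d_i}{k}$ is asymptotically maximized by $B^e_n$ --- is left unproved and is not a routine consequence of K\H{o}v\'ari--S\'os--Tur\'an; the paper never needs any such statement.

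The paper's proof sidesteps both issues by choosing a different spanning forest and by making the triangle-freeness do the work in the assembly step, not in a star count. It takes $F$ to be a perfect matching of $P_l$ (plus an isolated vertex when $l$ is odd), notes trivially that $G$ has at most $\binom{e}{\lfloor l/2\rfloor}$ copies of $F$, and then counts extensions of an \emph{ordered} copy of $F$ to a path: the first pair of consecutive matching edges can be joined in at most $2$ ways, and every subsequent join in at most $1$ way, precisely because if $u_{i+1}v_{i+1}$ is an edge then a $K_3$-free graph cannot contain both $v_iu_{i+1}$ and $v_iv_{i+1}$. In the (essentially complete bipartite) graph $B^e_n$ these extension counts are exactly $2$ and exactly $1$ for almost all ordered matchings, so the upper bound is matched with no constant-factor loss. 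If you want to salvage your approach you would have to replace the product bound over stars by an argument that tracks, as the paper does, exactly how many ways the pieces can be glued in a triangle-free graph versus in $B^e_n$; the crude inequality $h(P_l,G)\le\prod_i h(S_{a_i},G)$ is already too lossy for this theorem.
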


\begin{proof} The proof goes similarly to that of Theorems \ref{kvaziklikk} and \ref{parosasy}; first we prove the statement for a spanning forest. Here the spanning forest $F$ is a matching of size $l/2$ in case $l$ is even. In case $l$ is odd, the spanning forest $F'$ is a matching of size $(l-1)/2$ and an isolated vertex. Obviously we can choose the edges of $F$ or $F'$ at most $\binom{e}{\lfloor l/2\rfloor}$ ways, and afterwards (in case $l$ is odd), an isolated vertex can be picked $n-l+1$ ways. On the other hand in $B^e_n$ by Lemma \ref{count} (ii) there are $(1-o(1))\binom{e}{\lfloor l/2\rfloor}$ copies of $F$ and $(1-o(1))n\binom{e}{\lfloor l/2\rfloor}$ copies of $F'$, which shows $ex(n,e,F,K_3)=(1+o(1))N(F,B^e_n)$ and $ex(n,e,F',K_3)=(1+o(1))N(F',B^e_n)$.

\begin{claim} We have $ex(n,e,P_l,K_3)\le \lfloor l/2\rfloor! ex(n,e,F,K_3)$ if $l$ is even, and $ex(n,e,P_l,K_3)\le \lfloor l/2\rfloor! ex(n,e,F',K_3)/2$ if $l$ is odd.
\end{claim}
\begin{proof}
Let us consider a $K_3$-free graph $G$. Let us assume first that $l$ is even.
To count the copies of $P_l$ in $G$, first we pick a copy of $F$. %or $F'$, depending on the parity of $l$. 
Then we choose an ordering of the edges of $F$ in $\lfloor l/2 \rfloor!$ ways. Then we need to add edges connecting the $i$'th edge $u_iv_i$ to the $i+1$st edge $u_{i+1}v_{i+1}$ for every $i<\lfloor l/2\rfloor$,
% and in case $l$ is odd, we need to connect the isolated vertex $v$ to the last edge, 
in order to obtain a copy of $P_l$. It is easy to see that we count every copy of $P_l$ exactly twice this way. Thus we need to show that in there are at most two ways to complete $F$ to $P_l$ in $G$. 

Observe that if $u_1v_1$ and $u_{2}v_{2}$ are edges, then there are at most two edges that connect $u_1$ or $v_1$ to $u_{2}$ or $v_{2}$. This means there are at most two ways to choose such an edge. After that, we know which endpoint of $u_2v_2$ (say, $u_2$) we need to connect to $u_3v_3$, and only one of $u_2u_3$ and $u_2v_3$ can be in $G$. Thus there is at most one way to choose the next connection. Similarly, for any $i$ we know that the path arrives to one of $u_i$ and $v_i$ (say $u_i$) continues with the edge $u_iv_i$ and from $v_i$ it can only go to its at most one neighbor among $u_{i+1}$ and $v_{i+1}$. 
%In case $l$ is odd and $i=\lfloor l/2\rfloor$, then obviously there is only one edge from $v_i$ to $v$. 
It means there are at most two ways to complete the path after choosing the ordering of the edges of $F$, finishing the proof in case $l$ is even.

If $l$ is odd, we proceed similarly, first we pick a copy of $F'$. Then we choose an ordering of the edges of $F'$ in $\lfloor l/2 \rfloor!$ ways. Then we need co connect the edges similarly. The only difference is that now we start with $v$, it has at most one neighbor among $u_1$ and $v_1$, and we continue the path like in the previous case. Thus there is at most one way to complete the path after choosing the ordering of the edges of $F'$, finishing the proof in case $l$ is odd.

\end{proof}
Having the above claim in hand, to finish the proof it is enough to show for $l$ even that there are $(1+o(1))\lfloor l/2 \rfloor!N(F,B_n^e)$ copies of $P_l$ in $B^e_n$. The proof follows that of the above claim; observe that in a complete bipartite graph there are exactly two edges that connect $u_1$ or $v_1$ to $u_{2}$ or $v_{2}$ and exactly one edge from $v_i$ to $u_{i+1}$ or $v_{i+1}$. Thus there is equality everywhere in the above claim. Thus we are done in case $B_n^e$ is complete bipartite. Otherwise there can be some copies of $F$ such that there are less ways to extend them to copies of $P_l$. However, must be among those copies of $F$ that contain the only vertex of $B_n^e$ that is not connected to every vertex on the other side. It is easy to see that the number of these copies of $F$ is $o(N(F,B^e_n))$, thus there are still $(1+o(1))\lfloor l/2 \rfloor!N(F,B_n^e)$ copies of $P_l$ in $B^e_n$, finishing the proof in case $l$ is even.

In case $l$ is odd, the same argument finishes the proof.
%In there are exactly two such edges
%This is exactly the number of edges that connect them in $B^e_n$, finishing the proof if $l$ is even. Similarly, for any vertex and edge in a $K_3$-free graph, there is at most one edge that connects the vertex to an endpoint of the edge. Again, this is exactly the number of edges that connect them in $B^e_n$, finishing the proof.
\end{proof}

Another possible generalization is to fix the number of copies (or an upper bound on the number of copies) of a subgraph $F$ and count the copies of $H$. Fisher and Ryan \cite{fr} proved such results in case both $F$ and $H$ are cliques.

\bigskip

\textbf{Funding}: Research supported by the National Research, Development and Innovation Office - NKFIH under the grants SNN 129364 and K 116769, by the J\'anos Bolyai Research Fellowship of the Hungarian Academy of Sciences and the Taiwanese-Hungarian Mobility Program of the Hungarian Academy of Sciences.

\end{document}